\renewcommand*{\backref}[1]{}
\renewcommand*{\backrefalt}[4]{\quad \tiny
 \ifcase #1 (\textbf{NOT CITED.})%
 \or    (Cited on page~#2.)%
  \else   (Cited on pages~#2.)%
  \fi}
\newtheorem{remark}{Remark}
\newtheorem{lemma}{Lemma}
\newtheorem{theorem}{Theorem}
\newtheorem{prop}{Proposition}
\newtheorem*{quest}{Question}
\newcommand{\Fix}{\mathrm{Fix}}
\newcommand{\R}{\mathbb{R}}
\newcommand{\Q}{\mathbb{Q}}
\newcommand{\Homeo}{\mathrm{Homeo}}
\title{Free orbits for minimal actions on the circle}
\author[J. Brum]{Joaqu\'in Brum}
\address{IMERL, Facultad de Ingenier\'ia, Universidad de la Rep\'ublica, Uruguay}
\email{joaquinbrum@fing.edu.uy}
\author[M. Mart\'inez]{Matilde Mart\'inez}
\address{IMERL, Facultad de Ingenier\'ia,Universidad de la Rep\'ublica, Uruguay} 
\email{matildem@fing.edu.uy}
\author[R. Potrie]{Rafael Potrie}
\address{CMAT, Facultad de Ciencias, Universidad de la Rep\'ublica, Uruguay}
\urladdr{www.cmat.edu.uy/$\sim$rpotrie}
\email{rpotrie@cmat.edu.uy}
\thanks{The authors were partially supported by CSIC grupo 618.}
\begin{document}

\maketitle

\begin{abstract}
We prove that if $\Gamma$ is a countable group without a subgroup isomorphic to $\mathbb{Z}^2$ that acts faithfully and minimally by orientation preserving homeomorphisms on the circle, then it has a free orbit. We give examples showing that this does not hold for actions by homeomorphisms of the line.  
\end{abstract}

\section{Introduction}

Foliations of codimension one and groups of homeomorphisms of the circle are closely related. 
  A particular but illuminating example of a foliation can be obtained via the suspension construction, by which an action of a surface group on the circle gives rise to a foliation on a circle bundle over a surface. In this example, fundamental groups of leaves correspond to stabilizers of points under the action, so that simply connected leaves translate into free orbits. When these foliations are minimal, either the generic leaf is simply connected or all leaves have a fundamental group which is not finitely generated (see \cite{ADMV}). With this motivation,  it is natural to ask if a minimal and faithful action of the fundamental group of a surface on the circle must have some free orbit.

It turns out that this is true in some greater generality and the purpose of this note is to prove the following result: 

\begin{theorem}
\label{maintheorem}
 Let $\Gamma$ be a countable group without a subgroup isomorphic to $\mathbb{Z}^2$.  If $\Gamma$ acts faithfully and minimally by orientation preserving homeomorphisms on the circle,
 then there exists a free orbit.
\end{theorem}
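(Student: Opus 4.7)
I would argue by contradiction: suppose no orbit is free, so that $S^1 = \bigcup_{\gamma \in \Gamma \setminus \{e\}} \Fix(\gamma)$. Since $\Gamma$ is countable and each $\Fix(\gamma)$ is closed, the Baire category theorem forces some $\gamma_0 \neq e$ to have $\Fix(\gamma_0)$ with non-empty interior; fix a connected component $U$ of $\mathrm{int}(\Fix(\gamma_0))$.

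I would then invoke the classical dichotomy for minimal actions on $S^1$ (Margulis/Ghys): either $\Gamma$ preserves a Borel probability measure on $S^1$, or the action is strongly proximal, i.e.\ for every proper closed $C \subsetneq S^1$ and every non-empty open $W \subseteq S^1$ there exists $g \in \Gamma$ with $g(C) \subseteq W$. In the invariant-measure case, minimality makes the measure atomless and of full support, so the action is topologically conjugate to one by rotations; faithfulness then realizes $\Gamma$ as a subgroup of $\mathrm{SO}(2)$, whose non-trivial elements have no fixed points on $S^1$. This contradicts $\gamma_0$ fixing $U$ pointwise, so we may assume the action is strongly proximal.

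In the strongly proximal case, I would pick any non-empty open interval $V$ with $\overline{V} \subset U$ and apply strong proximality to $C = S^1 \setminus U$ to obtain $g \in \Gamma$ with $g(S^1 \setminus U) \subseteq V$, equivalently $g(U) \supseteq S^1 \setminus V$. The conjugate $\eta := g \gamma_0 g^{-1}$ fixes $g(U)$ pointwise, so its support is contained in $V \subset U \subseteq \Fix(\gamma_0)$. Hence the supports of $\gamma_0$ and $\eta$ are disjoint and the two elements commute. A non-trivial torsion element of $\Homeo_+(S^1)$ has no fixed points, so both $\gamma_0$ and $\eta$ have infinite order; and any relation $\gamma_0^a = \eta^b$ would make $\eta^b$ trivial on $U$ and also trivial off $V$, hence identically trivial, which forces $a = b = 0$. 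Therefore $\langle \gamma_0, \eta \rangle \cong \mathbb{Z}^2 \leq \Gamma$, contradicting the hypothesis.

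The main technical input is the strong-proximality half of the Margulis/Ghys dichotomy; granting this, the argument reduces to a Baire category step, a shrinking-of-a-conjugate step, and a short disjoint-support check that the resulting abelian subgroup genuinely has rank two.
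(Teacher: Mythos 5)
Your overall strategy---producing a conjugate of $\gamma_0$ whose support is pushed inside $\mathrm{int}(\Fix(\gamma_0))$, so that the two elements commute with disjoint supports and generate a $\mathbb{Z}^2$---is sound and genuinely different from the paper's argument, which avoids the structure theory of minimal actions altogether and instead uses an elementary key lemma ($\Fix(f)\neq\Fix(g)$ and $\Fix(f)\cup\Fix(g)=S^1$ force $\langle f,g\rangle\cong\mathbb{Z}^2$) together with an equivariant monotone map built from the fixed intervals and a rotation-number analysis. However, there is a genuine gap in the dichotomy you invoke. For minimal actions on $S^1$ the correct statement (see \cite{Ghys} or \cite{Navas}) is a trichotomy, not a dichotomy: either the action is conjugate to an action by rotations, or it is strongly proximal, or there is a finite-order homeomorphism $\tau$ of order $k\geq 2$ commuting with the whole image of $\Gamma$ such that the induced action on the quotient circle $S^1/\langle\tau\rangle$ (but not the action on $S^1$ itself) is strongly proximal. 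In this third case your contraction step fails: since $\tau$ centralizes $\gamma_0$, it preserves $\Fix(\gamma_0)$ and permutes the components of its interior freely (a non-trivial power of $\tau$ is fixed-point free, so cannot fix the endpoints of $U$), hence $U,\tau U,\dots,\tau^{k-1}U$ are pairwise disjoint and $C=S^1\setminus U$ meets every $\langle\tau\rangle$-orbit. Every $g\in\Gamma$ sends $\langle\tau\rangle$-orbits to $\langle\tau\rangle$-orbits, so $g(C)$ also meets every $\langle\tau\rangle$-orbit and can never be contained in an interval $V$ small enough that $V\cup\tau V\cup\dots\cup\tau^{k-1}V\neq S^1$. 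The third case genuinely occurs among minimal actions with no invariant measure (lift the boundary action of a cocompact Fuchsian group to a $k$-fold cover of the circle), so it cannot be dismissed.

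The gap is repairable along your own lines: replace $U$ by $U^{*}=U\cup\tau U\cup\dots\cup\tau^{k-1}U$, which is still contained in $\Fix(\gamma_0)$ because $\tau$ commutes with $\gamma_0$, and contract the $\tau$-invariant proper closed set $S^1\setminus U^{*}$ (the preimage of a proper closed subset of the quotient circle) into $V^{*}=V\cup\tau V\cup\dots\cup\tau^{k-1}V$ using strong proximality of the quotient action. The conjugate $\eta=g\gamma_0 g^{-1}$ then has support in $\overline{V^{*}}\subseteq U^{*}\subseteq\Fix(\gamma_0)$, disjoint from the support of $\gamma_0$, and your commutation, torsion-freeness and no-relation arguments go through verbatim. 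The remaining steps of your proposal (the Baire category step, the elimination of the invariant-measure case via a full-support atomless measure conjugating $\Gamma$ into the rotations, and the check that $\langle\gamma_0,\eta\rangle$ has rank two) are all correct. Once corrected, your route is shorter than the paper's but rests on a substantially heavier black box; the paper's proof is self-contained apart from basic Poincar\'e rotation-number theory.
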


Recall that a \emph{free orbit} is the orbit of a point $x\in S^1$ such that for every $g \in \Gamma \setminus \{e\}$ one has that $gx \neq x$. 

Minimality of the action is necessary as it is shown by an example in subsection \ref{ss.nonminimal}. For actions on $\Homeo_+(\mathbb{R})$ the result is also non-valid, see subsection \ref{ss.countline}. 

It is natural to wonder whether a similar result will hold in higher dimensions. For example, one can ask: 

\begin{quest}
Is there a faithful and minimal action of the free group in two generators on a closed surface without free orbits?
\end{quest}

As a direct consequence of this results one deduces that if $f,g \in \mathrm{Homeo}_+(S^1)$ are homeomorphisms such that $f$ has a non-trivial interval of fixed points and $g$ is conjugate to an irrational rotation, then the group generated by $f$ and $g$ inside $\mathrm{Homeo}_+(S^1)$ is not free (and in particular contains a copy of $\mathbb{Z}^2$). It is illustrative to try to prove this consequence directly as it sheds light on the underlying ideas of our proof. 

\begin{remark}
One can also see that $\mathbb{Z}^2$ itself does not admit faithful minimal actions on the circle without free orbits. In fact, any group admitting such an action must be non-abelian, as we will see in Section \ref{s.improve}, where we give further conditions a group acting minimally and without free orbits must satisfy.
\end{remark}

For an excellent panoramic of the theory of group actions on the circle, see \cite{Ghys} or \cite{Navas}. 
We are grateful to Andr\'es Navas for his kind feedback on an early draft of this paper. The second author is also thankful to Fernando Alcalde, Fran\c{c}oise Dal'Bo and Alberto Verjovsky for many fruitful discussions about the topology of leaves of foliations by surfaces.

\section{Proof of Theorem \ref{maintheorem}}

We start with a simple remark which works for general countable groups. 

\begin{remark} \label{remarkinicial}
For each $g\in\Gamma\backslash\{e\}$, consider the set $\Fix(g)=\{x\in S^1:\ gx=x\}$ of its fixed points. The points with free orbit are exactly those in 
 $$\bigcap_{g\in\Gamma\backslash\{e\}}\Fix(g)^c.$$
If a countable group $\Gamma$ acts minimally on the circle and the action has no free orbit, then 
the following holds:
 \begin{enumerate}
 \item By Baire's Category Theorem there must exist $g\in \Gamma\backslash\{e\}$ such that $\Fix(g)$ has non-empty interior.
 \item Since the $\Gamma$-action is minimal on $S^1$, for every $x\in S^1$ there exists $h\in\Gamma\backslash\{e\}$ such that $x$ is an interior point of $\Fix(h)$.
\end{enumerate}
\end{remark}

Notice that the fact that $\Gamma$ is countable is crucial for the proof of this remark as it uses Baire category theorem. It is likely that arguments in the lines of the ones presented in \cite{BK} may help construct a non-countable group for which Theorem \ref{maintheorem} fails, however, we could not construct such and example and believe that this would exceed the purposes of this note. The main difficulty we encountered in approaching this problem can be summarized in the following question:

\begin{quest}
Is it possible to construct a map $\varphi: S^1 \to \mathrm{Homeo}_+(S^1)$ such that the group generated by the elements in the image of $\varphi$ is free? 
\end{quest}

We return to the proof of the Theorem. The following lemma will be the tool to obtain abelian subgroups.

\begin{lemma}\label{keylemma}
 Let $f$ and $g$ be two nontrivial orientation-preserving homeomorphisms of the circle. If $\Fix(f)\neq \Fix(g)$ and $\Fix(f)\cup \Fix(g)=S^1$, then
 the subgroup of $\mathrm{Homeo}_+(S^1)$ generated by $f$ and $g$ is isomorphic to $\mathbb{Z}^2$.
\end{lemma}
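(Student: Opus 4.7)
\medskip
\noindent\emph{Plan.} The strategy is to first show that $f$ and $g$ commute by exploiting the partition of the circle forced by the two hypotheses, and then rule out nontrivial relations to identify the group with $\mathbb{Z}^2$.

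Set $U:=S^1\setminus\Fix(f)$ and $V:=S^1\setminus\Fix(g)$. The assumption $\Fix(f)\cup\Fix(g)=S^1$ is equivalent to $U\cap V=\emptyset$, and together with $U\subset\Fix(g)^c$ and $V\subset\Fix(f)^c$ gives the inclusions $U\subset\Fix(g)$ and $V\subset\Fix(f)$, as well as the disjoint decomposition $S^1=U\sqcup V\sqcup(\Fix(f)\cap\Fix(g))$. The set $U$ is open, hence a disjoint union of open arcs; since $\partial U\subset\Fix(f)$, each such arc $I$ is $f$-invariant, and $f|_I$ is an orientation-preserving homeomorphism of $I$ without fixed points, hence topologically conjugate to a translation of $\R$. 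In particular $f^m|_I$ has no fixed points for $m\neq 0$. The symmetric statements hold for $g$ on components of $V$.

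To establish commutativity I would check $fg(x)=gf(x)$ on each piece of the partition: on $U$ we have $g(x)=x$ and $f(x)\in U$, so both sides equal $f(x)$; on $V$ the argument is symmetric; and on $\Fix(f)\cap\Fix(g)$ both compositions equal $x$. Hence $[f,g]=e$. To see that the resulting abelian group is free of rank two, take any relation $f^m g^n=e$. Both $U$ and $V$ are nonempty (because $f$ and $g$ are nontrivial), so pick $x\in U$: from $g^n(x)=x$ the relation reduces to $f^m(x)=x$, and the observation above forces $m=0$. A symmetric argument using a point of $V$ gives $n=0$, so $\langle f,g\rangle\cong\mathbb{Z}^2$.

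No step looks genuinely hard; the only point requiring mild care is the invariance of each component of $U$ under $f$, which uses that the boundary of such a component lies in $\Fix(f)$ and that $f$ preserves orientation. Note that the hypothesis $\Fix(f)\neq\Fix(g)$ is not used for commutativity but is compatible with it; commutativity alone would follow from $\Fix(f)\cup\Fix(g)=S^1$ with $f,g$ nontrivial, and the remaining hypothesis only reinforces that $U$ and $V$ are both nonempty, which is already guaranteed by nontriviality of $f$ and $g$.
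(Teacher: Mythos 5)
Your proof is correct. The commutativity step is essentially the paper's: the paper computes $[f,g](x)$ by a case analysis on whether $x$ lies in $\Fix(f)$ or $\Fix(g)$, using that $\Fix(f)$ is $f$-invariant, which is the same mechanism as your check of $fg=gf$ on the pieces $U$, $V$, $\Fix(f)\cap\Fix(g)$. Where you genuinely diverge is the identification of the abelian group $\langle f,g\rangle$ with $\mathbb{Z}^2$. The paper invokes the classification of finitely generated abelian groups: it argues the group is not cyclic (from $\Fix(f)\neq\Fix(g)$) and torsion-free (since $\Fix(f)$ and $\Fix(g)$ are closed sets covering the connected circle, they intersect, so every element of the group has a fixed point and hence cannot have finite order). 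You instead kill every relation $f^mg^n=e$ directly: evaluating at a point $x\in U$ gives $f^m(x)=x$, and fixed-point-freeness of $f$ on the component of $U$ containing $x$ (an open arc on which $f$ acts like a translation) forces $m=0$, then symmetrically $n=0$. Your route is more self-contained --- it avoids both the structure theorem and the slightly delicate ``not cyclic'' step --- and it exhibits the injectivity of $(m,n)\mapsto f^mg^n$ explicitly. Your closing observation is also right: under the hypotheses $f,g\neq e$ and $\Fix(f)\cup\Fix(g)=S^1$, equality $\Fix(f)=\Fix(g)$ would force $\Fix(f)=S^1$, so the hypothesis $\Fix(f)\neq\Fix(g)$ is automatic; the paper, by contrast, makes essential use of it in its ``not cyclic'' step.
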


\begin{proof} Let $H\subset \Homeo_+(S^1)$ the subgroup generated by $f$ and $g$. We will begin by proving that $H$ is abelian. 

Notice that since $\Fix(f)\cup \Fix(g)=S^1$, we know that any point is either fixed by
$f$ or fixed by $g$.
Let $x\in S^1$. Without loss of generality, assume that $x\in \Fix(g)$. Therefore
$[f,g](x)=fgf^{-1}(x)$. If $x\in \Fix(f)$, then $x$ is fixed by both $f$ and $g$ and therefore by $[f,g]$. Otherwise, $f^{-1}(x)$ is not fixed 
by $f$ and is therefore fixed by $g$, so $[f,g](x)=x$. This implies that every point is fixed by $[f,g]$ and therefore $[f,g]=\mathrm{id}$ showing that $f$ and $g$ commute. 

Next, remark that since $\Fix(f)\neq \Fix(g)$ the group $H$ cannot be cyclic. Due to the classification of abelian groups, all
we have to see is that $H$ is torsion-free. Since the sets $\Fix(f)$ and $\Fix(g)$ are closed they cannot be disjoint, so any element of $H$ must have fixed points. This means that $H$ does not contain an element of finite order. 
\end{proof}

In order to prove Theorem \ref{maintheorem}, we will consider a countable group $\Gamma$ acting faithfully and minimally on $S^1$. Assuming that 
the action has no free orbit,  we will prove that $\Gamma$ contains a subgroup isomorphic to $\mathbb{Z}^2$. 

We will only use that $\Gamma$ is countable in order to use Remark \ref{remarkinicial} so that there is an element whose fixed point set has non-empty interior. Under this assumption, the result does not further use countability of $\Gamma$. 

\begin{proof}[Proof of Theorem \ref{maintheorem}] For every $x\in S^1$, consider the set 

$$A_x=\{I:\ I \mbox{ is an open interval in } S^1 \mbox{ and } x\in I\subset \Fix(g) \mbox{ for some } g\in \Gamma\backslash\{e\}\}.$$ 

Remark \ref{remarkinicial} guarantees that $A_x$ is non-empty for every $x\in S^1$. We fix an orientation in $S^1$. The orientation induces a total order on any interval $I$, and we can therefore write $I=(I_-,I_+)$. In particular, the interval $S^1\backslash\{x\}$ has an order, which allows us to consider suprema and infima of subsets of  $S^1\backslash\{x\}$.

Assume that for a given $x\in S^1$ the set $\mathcal{A}_x= \{I_+:\ I\in A_x\}$ is unbounded above in the total order of  $S^1\backslash\{x\}$. Consider $f\in\Gamma\backslash\{e\}$ such that $x$ is an interior point of $\Fix(f)$. Since $\mathcal{A}_x$ is unbounded, there exists $g\in \Gamma$ whose set of fixed points contains an interval $I$ such that $I\cup \Fix(f)=S^1$. In particular, $\Fix(f)\cup \Fix(g)=S^1$, and Lemma \ref{keylemma} implies
that $\Gamma$ contains a free abelian group of rank 2.

Otherwise, $\mathcal{A}_x$ must be bounded for all $x\in S^1$. In this case, we can define
$$h:S^1\to S^1,\ \ \ h(x)=\sup\mathcal{A}_x.$$

The map $h$ has the following properties which follow directly from its definition:
\begin{enumerate}
 \item it is monotonically increasing, (i.e.: any lift of $h$ to the line is a monotone map)
 \item it is equivariant, meaning that for every $g \in \Gamma$ and $x\in S^1$ one has 
  $gh(x)=h(gx)$. 
\end{enumerate}

Let us now show that $h$ is an homeomorphism. By equivariance, it follows that the image of $h$ is invariant by the $\Gamma$ action, therefore, by minimality it must be dense as otherwise $h$ would have a proper closed invariant subset. 
Now we check that $h$ has to be strictly monotonous. Consider $V=\bigcup_{x/int(h^{-1}(x))\neq\emptyset}int(h^{-1}(x))$. $V$ is an open, proper, $\Gamma$-invariant subset. The minimality of the action implies that $V$ is empty. Finally, since $h$ is strictly monotonous and has dense image, one obtains that $h \in \Homeo_+(S^1)$.

Now, we distinguish cases according to the rotation number of $h$ (\cite[Chapter 11]{KH}).

If $\rho(h)$ is irrational, then $h$ must be either a Denjoy counterexample or conjugated to a rational rotation. In the former case, $h$ has a countable union of intervals in its wandering set, which must be $\Gamma$-invariant since $h$ is equivariant. This is inconsistent with the minimality of the 
$\Gamma$-action. Therefore, $h$ is conjugated to an irrational rotation, and $\Gamma$ is isomorphic to a subgroup of the centralizer of $h$ in $\Homeo_+(S^1)$, but the centralizer of an irrational rotation does not have non-trivial elements with fixed points, which also gives a contradiction.

If $\rho(h)$ is rational, then $h$ has to be conjugate to a rigid rotation as otherwise the closed set of periodic points would be a proper closed invariant set for $\Gamma$ contradicting minimality. Assume then that $h^n=\mathrm{id}$, we will find $g$ and $g'$ in $\Gamma$ whose set of fixed points is different and whose union is $S^{1}$. For this, consider $x \in S^1$ and $g\in \Gamma$ such that $\Fix(g)$ contains $x$ in its interior. It follows that $\Fix(g)$ contains at least $n$ connected components each containing respectively $x, h(x), \ldots, h^{n-1}(x)$. Choose a point $y$ inside one of those components.
As $h$ is periodic, the orbit of $y$ by $h$ is alternated with the orbit of $x$.  By the definition of $h$ and its equivariance, we can find $g' \in \Gamma$ such that $\Fix(g) \cup \Fix(g') =S^1$ and both $g$ and $g'$ are not the identity. This allows one to apply Lemma \ref{keylemma} to conclude. 
\end{proof}

\section{Counterexamples}\label{s.nonminimal}

\subsection{A non-minimal action on $S^1$ without free orbits}\label{ss.nonminimal}

We construct here a faithful action of the fundamental group of a surface on the circle with no free orbits. The same can be obtained by adding a global fixed point to the example in the next subsection, but we present this example for the particular relevance of surface groups in actions on the circle. 

Let $\Gamma$ be the fundamental group of an oriented compact surface of genus greater than one. It does not contain any subgroup isomorphic to $\mathbb{Z}^2$. Surface groups are known to be $\omega$-residually free (see \cite{Ch-G}), which means that for any finite subset $X$ of $\Gamma$ there exists a homomorphism from $\Gamma$ to a free group whose restriction to $X$ is injective.

Consider a free subgroup $F$ of $\Homeo_+(\mathbb{R})$. Write $\Gamma=\cup_{n=0}^\infty X_n$ as an increasing union of finite subsets, and for each $n$ let $\varphi_n: \Gamma\to F$ be an homomorphism that sends $X_n$ injectively into $F$. Notice that, since $\Gamma$ is non-free, the Nielsen-Schreier theorem (see, for example, \cite[Section 2.2.4]{Stillwell}) implies that $\varphi_n$ must have a non-trivial kernel.
Take an increasing  sequence of points $(x_n)_{n=1}^\infty\bibliographystyle{}$ in $\mathbb{R}$ which does not accumulate in  $\mathbb{R}$. Taking $S^1$ to be $\mathbb{R}\cup\{\infty\}$ and setting $x_0=\infty$, the circle  is the union of the intervals $[x_n,x_{n+1}]$, for $n\geq 0$. We will identify each open interval $(x_n,x_{n+1})$ with the real line, so that $\varphi_n$ can be seen as a representation of $\Gamma$ in $\Homeo_+(x_n,x_{n+1})$.

We will define
$$\varphi:\Gamma\to\Homeo_+(S^1)$$ 
as follows:
\begin{itemize}
 \item[\textasteriskcentered] for any $g\in\Gamma$, $\varphi(g)$ fixes $\{x_n,\ n\geq 0\}$,
 \item[\textasteriskcentered] restricted to $(x_n,x_{n+1})$, $\varphi(g)$ coincides with $\varphi_n(g)\in\Homeo_+((x_n,x_{n+1}))$.
\end{itemize}

It is clear that $\varphi$ is a faithful representation, since each $\varphi_n$ is injective on $X_n$. We will see that the $\Gamma$-action defined by $\varphi$ has no free orbits. Let $x\in S^1$. If $x$ is not fixed by $\Gamma$, it belongs to $(x_n,x_{n+1})$ for some $n$, and it is therefore fixed by the non-trivial subgroup $\ker(\varphi_n)$.

\subsection{The main theorem does not hold in $\R$}\label{ss.countline}

The following example shows that Theorem \ref{maintheorem} is not true if we consider actions on the line.  
We will construct a faithful action of the free group $\mathbb{F}_2=\langle a,b\rangle$ on the line that is minimal and such that every point is stabilized by some non trivial element. 

We will start by defining three different $\mathbb{F}_2$ actions and later we will ``glue" them. 

Consider 
\begin{itemize}
\item $\phi_1\colon \mathbb{F}_2\to\R$ such that $\phi_1(a)(x)=x$ and $\phi_1(b)(x)=x+1$.
\item $\phi_2\colon \mathbb{F}_2\to\R$ such that $\phi_2(a)(x)=x+\alpha$ and $\phi_2(b)(x)=x+\beta$ for $\alpha$ and $\beta$ rationally independent over $\Q$. We also ask that $0<\alpha<1$ and $0<\beta<1$
\item $\phi_3\colon \mathbb{F}_2\to\R$ any action with a free orbit and without global fixed points.
\end{itemize}

Take $p>4$ such that $\phi_3(a)(p)>4+\alpha$ and $\phi_3(b)(p)>4+\beta$.
Define $f\in \Homeo_{+}(\R)$ satisfying $f(x)=\phi_1(a)(x)$ if $x<0$, $f(x)=\phi_2(a)(x)$ if $x\in[1,4]$ and $f(x)=\phi_3(a)(x)$ if $x>p$. 
Now we define $g\in \Homeo_{+}(\R)$ satisfying $g(x)=\phi_2(a)(x)$ if $x<0$, $g(x)=\phi_2(b)(x)$ if $x\in[1,4]$ and $g(x)=\phi_3(b)(x)$ if $x>p$. Finally define $f$ and $g$ over $[0,1]\cup[4,p]$ so that $Fix(f)\cap Fix(g)=\emptyset$.

Consider $\psi\colon\mathbb{F}_2\to\Homeo_{+}(\R)$ defines as $\psi(a)=f$ and $\psi(b)=g$. 
Since $\phi_3$ has a free orbit and $\phi_3$ has no global fixed point, for any $g\in\mathbb{F}_2-\{e\}$ there exists $x\in\R$ greater than $p$ such that $\phi_3(g)(x)\neq x$ and therefore $\psi(g)(x)\neq x$. This implies that $\psi$ is a faithful action. 

Now, the fact that $\psi$ has no global fixed points implies that given $x\in\R$ there exists $g\in\mathbb{F}_2$ so that $\psi(g)(x)<0$ and therefore $\psi(g^{-1}ag)(x)=x$ which proves that $\psi$ has no free orbit. 

It remains to check the minimality of $\psi$. Observe that given any $x\in[1,2]$ it is clear that the $\psi$ orbit of $x$ is dense on $[1,2]$. Now, since $\psi(a)([1,2])\cap[1,2]\neq\emptyset$ and $\psi(b)([1,2])\cap[1,2]\neq\emptyset$ we can deduce that $\mathbb{F}_2.[1,2]$, the union of the $\psi$ orbits of points in $[1,2]$, is a connected set. Also, since $\psi$ has no global fixed points $\mathbb{F}_2.[1,2]$ is unbounded in both directions and therefore $\mathbb{F}_2.[1,2]=\R$. Finally, any orbit accumulates on $[1,2]$ and therefore on $\R$ as claimed.

\begin{remark}
 Since any action on $\R$ can be seen as an action on $S^1=\R\cup\{\infty\}$ with a global fixed point, this is also an example of how Theorem \ref{maintheorem} can fail when the action is not minimal.
\end{remark}


\section{Further properties of minimal actions without free orbits}\label{s.improve}

\begin{remark}
 If $\Gamma$ is a non-cyclic group acting minimally and faithfully without free orbits con the circle, then it is non-abelian.
\end{remark}

To see this, consider an element $f\in\Gamma\backslash\{e\}$ such that $\Fix(f)$ is non-empty. If $\Gamma$ were abelian, the set $\Fix(f)$ would be invariant by all elements of $\Gamma$, so the action would not be minimal.

\begin{prop}
 If $\Gamma$ is a countable group acting minimally and faithfully without free orbits on the circle, then it contains a free group in two generators.
\end{prop}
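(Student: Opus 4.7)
The plan is to invoke Margulis's classical dichotomy for groups acting by orientation preserving homeomorphisms on the circle: any such group either preserves a Borel probability measure on $S^1$, or contains a non-abelian free subgroup. Given this alternative, it is enough to rule out the first possibility for our $\Gamma$, which then automatically lands it in the second.

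To that end I would argue by contradiction, assuming that $\mu$ is a $\Gamma$-invariant Borel probability measure on $S^1$. The support of $\mu$ is a closed $\Gamma$-invariant subset of $S^1$, so by minimality $\mathrm{supp}(\mu)=S^1$. If $\mu$ had an atom at some point $p$, then every point of the orbit $\Gamma\cdot p$ would be an atom of the same mass; since $\mu(S^1)=1$, the orbit $\Gamma\cdot p$ would have to be finite, but a finite $\Gamma$-invariant set cannot be dense in $S^1$, contradicting minimality. Hence $\mu$ is atomless and of full support.

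The cumulative distribution of $\mu$ (chosen with respect to a basepoint and lifted to $\R$) then provides a homeomorphism of $S^1$ that conjugates the $\Gamma$-action into the group of rigid rotations of $S^1$. In a rotation action every non-identity element acts without fixed points, so every orbit is free; this flatly contradicts the hypothesis that the $\Gamma$-action has no free orbit. Therefore no invariant probability measure can exist, and Margulis's theorem delivers the required copy of the free group on two generators inside $\Gamma$.

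The main obstacle is really just importing Margulis's theorem in the form we need, for which \cite{Ghys} and \cite{Navas} are appropriate references; the remaining steps (ruling out atoms, obtaining full support, and producing the conjugacy to rotations) are standard once minimality is available, and no appeal to the hypothesis forbidding $\mathbb{Z}^2$ is needed for this stronger conclusion.
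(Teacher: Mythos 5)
Your proof is correct and follows essentially the same route as the paper: invoke the Ghys--Margulis dichotomy and rule out an invariant probability measure by showing minimality forces it to be atomless with full support, hence conjugating $\Gamma$ into the rotation group. The only (harmless) difference is the final contradiction: you observe a faithful rotation action has all orbits free, while the paper instead contradicts the fact that a minimal action without free orbits must be non-abelian.
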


\begin{proof} A result conjectured by Ghys and later proved by Margulis (see \cite{Margulis} or \cite{Navas}), states that any group of circle homeomorphisms either preserves a probability measure on $S^1$ or contains a free group in two generators. If $\Gamma$ acts without free orbits, it must be non-abelian. 

Suppose there is a $\Gamma$-invariant probability measure $\mu$. Since the action is minimal, it must have full support and no atoms. There is an homeomorphism sending $\mu$ to the Lebesgue measure. This means $\Gamma$ must be conjugated to a group of rotations, and therefore abelian, which gives a contradiction. 
 \end{proof}

\begin{prop}
If $\Gamma$ is a countable group acting minimally and faithfully without free orbits on the circle, then it contains free abelian groups of arbitrarily large rank. 
\end{prop}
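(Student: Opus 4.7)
The plan is to reduce the proposition to a statement about fixed sets and realize it using the dynamical setup of the proof of Theorem \ref{maintheorem}. The key reduction is this strengthening of Lemma \ref{keylemma}: if $g_1,\dots,g_n\in\Gamma\setminus\{e\}$ have pairwise disjoint, non-empty ``non-fixed sets'' $U_{g_i}:=S^1\setminus\Fix(g_i)$, then $\langle g_1,\dots,g_n\rangle\cong\mathbb{Z}^n$. Indeed, each pair $(g_i,g_j)$ satisfies the hypotheses of Lemma \ref{keylemma}: the fixed sets cover $S^1$ because the $U$'s are disjoint, and are distinct because the $U$'s are non-empty. Hence the $g_i$'s pairwise commute, so the generated subgroup is abelian. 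For linear independence, any relation $g_1^{a_1}\cdots g_n^{a_n}=e$ restricted to $U_{g_j}$ becomes $g_j^{a_j}|_{U_{g_j}}=\mathrm{id}$, because $U_{g_j}\subseteq\Fix(g_i)$ for $i\neq j$; combined with the automatic identity on $\Fix(g_j)$, this gives $g_j^{a_j}=e$ globally, and the infinite order of $g_j$ (an orientation-preserving circle homeomorphism with a fixed point is torsion-free) forces $a_j=0$.

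The task therefore reduces to finding, for each $n$, elements $g_1,\dots,g_n\in\Gamma$ with pairwise disjoint non-empty $U_{g_i}$'s. Following the dichotomy in the proof of Theorem \ref{maintheorem}, we are either in the case where $\mathcal{A}_x$ is unbounded for some $x\in S^1$, or in the case where the map $h$ is well-defined and conjugate to a rational rotation of order $m\geq 2$ that commutes with $\Gamma$. In the first case, for any $f\in\Gamma$ with $x\in\mathrm{int}(\Fix(f))$ and any $\varepsilon>0$ one finds elements $g\in\Gamma$ whose fixed set contains an arc of length greater than $1-\varepsilon$, so that $U_g$ lies in a tiny arc; using minimality to translate these ``almost-identities'' to various positions yields $n$ such elements with pairwise disjoint $U$'s. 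In the second case one imitates the construction of $g'$ in the proof of Theorem \ref{maintheorem}: starting from $g$ with $\mathrm{int}(\Fix(g))$ non-empty, one picks $n$ points $y_1,\dots,y_n$ distributed across the components of $\mathrm{int}(\Fix(g))$ and produces elements $g_{y_i}$ with $y_i\in\mathrm{int}(\Fix(g_{y_i}))$ whose fixed intervals are controlled via $h(y_i)=\sup\mathcal{A}_{y_i}$ and the equivariance of $h$.

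The hard part will be ensuring that the resulting $U_{g_{y_i}}$'s are pairwise disjoint: the proof of Theorem \ref{maintheorem} only arranges this property between each $g_{y_i}$ and the reference element $g$, not between distinct $g_{y_i}$'s. I expect to overcome this by exploiting the $h$-equivariance, which forces each $U_g$ to be a union of $h$-orbits of arcs and hence to be determined by its trace on a single fundamental domain of $h$. Combined with the density of $\Gamma$-orbits (minimality), this should reduce the problem to finding $n$ elements whose complement arcs are disjoint within a single fundamental domain, after which $h$-periodicity propagates the disjointness to all of $S^1$.
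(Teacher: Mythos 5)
Your proposal follows essentially the same route as the paper: reduce to producing, for each $n$, elements $g_1,\dots,g_n$ whose fixed-point sets are distinct and pairwise cover $S^1$ (equivalently, whose non-fixed sets $U_{g_i}$ are non-empty and pairwise disjoint), and then produce such elements in each branch of the dichotomy from the proof of Theorem \ref{maintheorem}. The paper only sketches this step; your explicit verification that such elements generate a copy of $\mathbb{Z}^n$ (commutativity from Lemma \ref{keylemma} applied pairwise, independence by restricting a relation to each $U_{g_j}$) and your observation that every $U_\gamma$ is $h$-invariant because $h$ centralizes $\Gamma$ are correct and supply details the paper leaves implicit.
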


\begin{proof} This follows by further inspection on the proof of our main theorem. We just sketch the proof. 

First, notice that $h$ is defined by contradiction and if it cannot be constructed it means that for every $x\in S^1$ there are elements for which there exist arbitrarily large intervals of fixed points containing $x$ (they contain the complement of arbitrarily small neighbourhoods of $x$). Notice that to obtain the conclusion, and in view of Lemma \ref{keylemma} it is enough to find, for a given $n>0$, elements $\gamma_1, \ldots, \gamma_n \in \Gamma$ so that their fixed point sets are different and pairwise cover $S^1$. This is possible under this assumption. 

Otherwise, one can construct $h$ and discuss similarly than in the proof of Theorem \ref{maintheorem}. We first recall that $\rho(h)$ must be rational. In this case one can argue as in the last paragraph to obtain such abelian groups. This completes the sketch of the proof. 
\end{proof}

\bibliographystyle{plain}
\bibliography{version2}{}

\begin{thebibliography}{2}

\bibitem[ADMV]{ADMV} 
Alcalde Cuesta, Fernando; Dal'Bo, Fran\c{c}oise; Martínez, Matilde and Verjovsky, Alberto. \emph{Minimality of the horocycle flow on foliations by hyperbolic surfaces with non-trivial topology.} Disc. Cont. Dyn. Syst. Series A, 36 (9) (2016), 4619--4635.

\bibitem[BK]{BK} Blass, Andreas and Kister, James, 
\emph{Free subgroups of the homeomorphism group of the reals},
Topology and its Applications, 
24,
(1986),
243--252. 

\bibitem[CG]{Ch-G} 
 Champetier, Christophe and Guirardel, Vincent,
\emph{Limit groups as limits of free groups},
Israel J. Math.,
 146,
 (2005),
 1--75.

\bibitem[G]{Ghys}
Ghys, Étienne.
\emph{Groups acting on the circle.}
Enseign. Math. (2) 47 (2001), no. 3--4, 329--407. 

\bibitem[KH]{KH} Katok, Anatole and Hasselblatt, Boris,
 \emph{Introduction to the modern theory of dynamical systems},
    {Encyclopedia of Mathematics and its Applications},
    {54},
   {Cambridge University Press, Cambridge},
    (1995).
   

\bibitem[M]{Margulis} 
 Margulis, Gregory,
  \emph{Free subgroups of the homeomorphism group of the circle},
  {C. R. Acad. Sci. Paris S\'er. I Math.},
    {331},
     (2000),
    {9},
     {669--674}.   
   
   
\bibitem[N]{Navas} 
    Navas, Andr{\'e}s,
  \emph{Groups of circle diffeomorphisms},
   {Chicago Lectures in Mathematics},
    {University of Chicago Press, Chicago, IL},
      (2011).
    




      

\bibitem[S]{Stillwell} 
 Stillwell, John,
  \emph{Classical topology and combinatorial group theory},
    {Graduate Texts in Mathematics},
  {72},
  {Second Edition},
{Springer-Verlag, New York},
 (1993).
    


\end{thebibliography}

\end{document}